\newcommand{\Z}{\mathbb Z}
\newcommand{\Q}{\mathbb Q}
\renewcommand{\O}{\mathcal O}
\newcommand{\op}{\operatorname}
\theoremstyle{plain}
\numberwithin{equation}{section}
\newtheorem{thm}{Theorem}[section]
\newtheorem{theorem}[thm]{Theorem}
\newtheorem{lemma}[thm]{Lemma}
\newtheorem{proposition}[thm]{Proposition}
\newtheorem{corollary}{Corollary}
\theoremstyle{definition}
\newtheorem{definition}[thm]{Definition}
\begin{document}

\setcounter{page}{1}

\title[Real cyclotomic fields of prime conductor and their class numbers]{Real cyclotomic fields of prime conductor and their class numbers}
\author{John C. Miller}
\address{Department of Mathematics\\
                Rutgers University\\
                Hill Center for the Mathematical Sciences\\
                110 Frelinghuysen Road
                Piscataway, NJ 08854-8019}
\email{jcmiller@math.rutgers.edu}

\subjclass[2010]{Primary 11R29, 11R18; Secondary 11R80, 11Y40}

\begin{abstract}
Surprisingly, the class numbers of cyclotomic fields have only been determined for fields of small conductor, e.g. for prime conductors up to 67, due to the problem of finding the ``plus part," i.e. the class number of the maximal real subfield.  Our results have improved the situation.  We prove that the plus part of the class number is 1 for prime conductors between 71 and 151.  Also, under the assumption of the generalized Riemann hypothesis, we determine the class number for prime conductors between 167 and 241.  This technique generalizes to any totally real field of moderately large discriminant, allowing us to confront a large class of number fields not previously treatable.
\end{abstract}

\maketitle

\section{Introduction}
The cyclotomic fields are among the most intensively studied classes of number fields.  Yet their class numbers remain quite mysterious.  Surprisingly, for cyclotomic fields of prime conductor, the class number has only been determined up to conductor 67, and no further cyclotomic fields of prime conductor have had their class numbers determined unconditionally since the results of Masley \cite{Masley} more than three decades ago.  The difficulty lies in the calculation of the ``plus part" of the class number, i.e. the class number of the maximal real subfield, a problem that has been described as ``notoriously hard" \cite{Schoof}.  For fields of larger conductor, their Minkowski bounds are far too large to be useful, and their discriminants are too large for their class numbers to be treated by Odlyzko's discriminant bounds.

Our results have improved the situation.  Following the method introduced in the author's earlier paper \cite{Miller}, we overcome the problem of large discriminants by establishing nontrivial lower bounds for sums over the prime ideals of the Hilbert class field, allowing us to obtain an upper bound for the class number.  We have the following main result.

\begin{theorem}\label{MainResult}
Let $p$ be a prime integer, and  let $\Q(\zeta_p + \zeta_p^{-1})$ denote the maximal real subfield of the $p$-th cyclotomic field $\Q(\zeta_p)$.  Then the class number of $\Q(\zeta_p + \zeta_p^{-1})$ is $1$ for $p \leq 151$.

Furthermore, under the assumption of the generalized Riemann hypothesis, the class number $h$ of $\Q(\zeta_p+\zeta_p^{-1})$ is
$$h = \begin{cases}
1 & \text{if } p \leq 241 \text{ and } p \neq 163, 191, 229, \\
4 & \text{if } p = 163, \\
11 & \text{if } p = 191, \\
3 & \text{if } p = 229. \\
  \end{cases}$$
\end{theorem}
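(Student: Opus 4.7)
The plan is to follow the method introduced in \cite{Miller}: derive an explicit upper bound $B_p$ on the plus class number $h := h(\Q(\zeta_p+\zeta_p^{-1}))$ by an explicit-formula argument applied on the Hilbert class field, then use either trivial divisibility or explicit ideal constructions to pin $h$ down exactly.

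For the upper bound, let $K = \Q(\zeta_p+\zeta_p^{-1})$ and let $H/K$ be the Hilbert class field, so $[H:K]=h$ and $H/K$ is everywhere unramified. Consequently $H$ inherits the (known, and large) root discriminant of $K$. Applying Weil's explicit formula to $\zeta_H(s)$ with a carefully chosen positive test function produces an inequality: the analytic side is controlled by the root discriminant and the degree $[H:\Q]=h\cdot(p-1)/2$ (bounded via Odlyzko unconditionally, or via tighter estimates under GRH), while the arithmetic side is a sum $\sum_{\mathfrak P}(\log N\mathfrak P)\,f(\log N\mathfrak P)\,(N\mathfrak P)^{-1/2}$ over prime ideals of $H$ of small norm. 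The central observation is that any rational prime $\ell$ which splits completely in $K$ and whose primes above $\ell$ in $K$ are principal must split completely in $H$, contributing $[H:\Q]$ terms of norm $\ell$ to this sum. Exhibiting many such $\ell$ produces a nontrivial lower bound on the arithmetic side, which, matched against the analytic side, yields $h\leq B_p$.

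Concretely, for each relevant $p$ I would enumerate small rational primes $\ell$ with $\ell^{(p-1)/2}\equiv 1\pmod p$ and search for generators of one prime $\mathfrak l\subset\O_K$ above $\ell$ among elements of $\O_K$ of norm $\ell^{(p-1)/2}$, written in the basis $\{\zeta_p^i+\zeta_p^{-i}\}$. Each successful generator contributes to the lower bound. For $p\leq 151$, the hope is that accumulating enough completely split principal primes pushes $B_p<2$ unconditionally, forcing $h=1$ (the small primes $p\leq 67$ being already classical). For $p\leq 241$ under GRH, the improved analytic bound yields a small integer $B_p$; divisibility information from genus theory and from the known relative class number $h^-_p$, combined with explicit non-principal ideals of orders $4$, $11$, $3$ at $p=163,191,229$ (verifying that the upper bound is attained), pin $h$ down to a single value in each case.

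The main obstacle is achieving the sharpness required in the upper-bound step: the root discriminant of $K$ grows roughly like $p^{1-1/\log p}$, so raw Odlyzko bounds on $H$ are useless beyond $p\approx 70$, and the entire proof hinges on harvesting \emph{enough} completely split principal primes of small norm in $K$ to shrink $B_p$ past the next integer threshold. The search becomes progressively harder with $p$, and GRH is invoked precisely when the achievable lower bound on the arithmetic sum is no longer strong enough to beat the unconditional analytic side --- GRH tightens the latter just enough to close the gap over the range $151<p\leq 241$.
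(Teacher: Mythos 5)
Your upper-bound machinery is exactly the paper's: transfer the root discriminant of $K=\Q(\zeta_p+\zeta_p^{-1})$ to its Hilbert class field, apply the explicit formula with a positive test function, and feed the arithmetic side with rational primes that split completely into \emph{principal} primes of $K$, certified by exhibiting algebraic integers of prime norm in the basis $\{2\cos(2\pi j/p)\}$ (the paper also uses quotients by Galois conjugates to extract such elements from composite norms). But your plan for converting the upper bound into an exact class number has a genuine gap. You propose to push the unconditional bound below $2$ for $p\leq 151$; this is not achievable by this method and is not what the paper does. The paper's bounds are weak --- $h<9$ for $p=71$, $h<3636$ for $p=131$ (even after harvesting $12{,}087$ split principal primes below $2\times 10^7$), $h<171$ for $p=151$ --- and the decisive final ingredient, absent from your proposal, is Schoof's theorem: for each prime conductor $p<10{,}000$ Schoof computes a number $\tilde h$ such that either $h=\tilde h$ or $h>80000\,\tilde h$. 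Any upper bound below $80000$ therefore pins $h$ down exactly. The same device handles $p=163,191,229$: no genus theory, relative class numbers, or explicit non-principal ideals of orders $4$, $11$, $3$ are needed, only an upper bound below $80000$ (obtained for $167\leq p\leq 193$ directly from Odlyzko's GRH discriminant bounds, and for $197\leq p\leq 241$ from the explicit-formula lemma). Without Schoof's table your argument cannot close.

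Two smaller but substantive errors in your prime-harvesting step: a rational prime $\ell$ splits completely in the degree-$(p-1)/2$ field $K$ if and only if $\ell\equiv\pm 1\pmod p$, not merely when $\ell^{(p-1)/2}\equiv 1\pmod p$ (the latter only says $\ell$ is a quadratic residue, i.e.\ splits in the quadratic subfield); and a generator of a degree-one prime above such an $\ell$ has absolute norm $\ell$, not $\ell^{(p-1)/2}$, which is why the paper searches sparse integer vectors whose norms are primes congruent to $\pm 1$ modulo $p$.
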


\section{Upper bounds for class numbers using Odlyzko's discriminant bounds}
Odlyzko's discriminant lower bounds can be applied to find upper bounds for class numbers.  Further details can be found in \cite{Linden}, \cite{Masley} and \cite{Odlyzko}.

\begin{definition}
Let $K$ denote a number field of degree $n$ over $\mathbb Q$.  Let $d(K)$ denote its discriminant.  The \emph{root discriminant} $\operatorname{rd}(K)$ of $K$ is defined to be:
$$\operatorname{rd}(K)=|d(K)|^{1/n}.$$
\end{definition}

\begin{proposition}
Let $L/K$ be an extension of number fields.  Then
$$\operatorname{rd}(K) \leq \operatorname{rd}(L),$$ with equality if and only if $L/K$ is unramified at all finite primes.
\end{proposition}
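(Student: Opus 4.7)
The plan is to reduce everything to the tower formula for discriminants. Writing $m=[K:\Q]$, $e=[L:K]$, and $n=[L:\Q]=me$, I would invoke the classical relation
$$d(L) \;=\; N_{K/\Q}\!\bigl(\mathfrak{d}(L/K)\bigr)\cdot d(K)^{e},$$
where $\mathfrak{d}(L/K)$ is the relative discriminant ideal of $L/K$. Taking absolute values and $n$-th roots gives
$$\operatorname{rd}(L) \;=\; \bigl|N_{K/\Q}(\mathfrak{d}(L/K))\bigr|^{1/n}\cdot|d(K)|^{e/n} \;=\; \bigl|N_{K/\Q}(\mathfrak{d}(L/K))\bigr|^{1/n}\cdot\operatorname{rd}(K),$$
since $e/n = 1/m$. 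This identity is the whole content of the proposition.

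Next I would observe that $\mathfrak{d}(L/K)$ is an integral ideal of $\O_K$, hence its absolute norm is a positive integer, so $|N_{K/\Q}(\mathfrak{d}(L/K))|^{1/n}\geq 1$. This immediately yields the inequality $\operatorname{rd}(K)\leq\operatorname{rd}(L)$.

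For the equality case, I would note that equality holds if and only if $N_{K/\Q}(\mathfrak{d}(L/K))=1$, which is equivalent to $\mathfrak{d}(L/K)=\O_K$. The standard characterization of the relative discriminant (via the different, or directly as the product of local contributions) then says that $\mathfrak{d}(L/K)=\O_K$ if and only if no finite prime of $K$ ramifies in $L$, i.e.\ $L/K$ is unramified at all finite primes. There is no serious obstacle here; the only thing to be careful about is to cite the tower formula for discriminants (or equivalently the multiplicativity of the different in towers combined with $\mathfrak{d}(L/K)=N_{L/K}(\mathfrak{D}_{L/K})$) and the fact that a prime of $K$ divides $\mathfrak{d}(L/K)$ exactly when it ramifies in $L$.
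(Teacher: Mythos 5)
Your proof is correct and follows essentially the same route as the paper: both rest on the tower formula $|d(L)| = N(\mathfrak{d}(L/K))\,|d(K)|^{[L:K]}$, the fact that the norm of the integral ideal $\mathfrak{d}(L/K)$ is a positive integer, and the characterization of ramified primes as divisors of the relative discriminant. You simply carry out the root-extraction step more explicitly than the paper does.
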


\begin{proof}
The discriminants of $K$ and $L$ are related by the formula
$$|d(L)| = N(d(L/K))|d(K)|^{[L:K]}$$
where $d(L/K)$ denotes the relative discriminant ideal and $N$ denotes the absolute norm of the ideal, from which the first statement follows.  A prime of $K$ ramifies in $L$ if and only if the prime divides the relative discriminant $d(L/K)$.  Thus, $L/K$ is unramified at all finite primes if and only if $d(L/K)$ is the unit ideal, proving the second statement.
\end{proof}

\begin{corollary}
Let $K$ be a number field.  Then the Hilbert class field of $K$ has the same root discriminant as $K$.
\end{corollary}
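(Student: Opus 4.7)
The plan is to apply the preceding proposition directly to the extension $L/K$ with $L$ taken to be the Hilbert class field $H$ of $K$. The proposition gives the inequality $\operatorname{rd}(K) \leq \operatorname{rd}(H)$ in general, together with the equality criterion that $H/K$ be unramified at all finite primes. So the entire content of the corollary reduces to checking that the Hilbert class field satisfies this unramifiedness condition.

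First I would recall the defining property of the Hilbert class field: $H$ is the maximal abelian unramified extension of $K$, where ``unramified'' is understood to mean unramified at every prime of $K$, including, in particular, every finite prime. This is the standard definition used in class field theory, so I would simply state it and cite a standard reference rather than prove it from scratch.

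With that in hand, the proof is a single line: since $H/K$ is unramified at all finite primes, the equality clause of the proposition applies and gives $\operatorname{rd}(H) = \operatorname{rd}(K)$, as claimed.

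There is no real obstacle here; the only subtlety worth flagging is the convention about ramification at infinite primes. Some authors require the Hilbert class field to be unramified at all primes (finite and infinite), others only at the finite primes; in either case $H/K$ is unramified at the finite primes, which is all the proposition needs. I would therefore be careful to phrase the invocation of the definition so that the argument is insensitive to this convention.
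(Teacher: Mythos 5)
Your proposal is correct and matches the paper's intent exactly: the paper states this corollary without proof, as an immediate consequence of the preceding proposition applied to the Hilbert class field, which is unramified at all finite primes by definition. Your remark about the convention at infinite primes is a reasonable precaution but does not change the argument.
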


Now suppose $K$ is a totally real number field of degree $n$.  Odlyzko constructed a table \cite{OdlyzkoTable} of pairs $(A,E)$ such that the discriminant of $K$ has the lower bound
$$|d(K)| > A^n e^{-E}.$$
Masley \cite{Masley} and van der Linden \cite{Linden} applied Odlyzko's discriminant bounds to the Hilbert class field and used the corollary above to get
$$ \log{\op{rd}(K)} > \log{A} - \frac{E}{hn}.$$
If $\op{rd}(K) < A$, then we obtain an upper bound for the class number $h$,
$$h < \frac{E}{n(\log{A} - \log{\op{rd}(K))}}.$$
However, if the root discriminant of $K$ is larger than the largest $A$ in Odlyzko's table, the above method can not be applied.  The largest value for $A$ in Odlyzko's table is $60.704$ (or $213.626$ if the generalized Riemann hypothesis is assumed).

\section{Upper bounds for class numbers beyond Odlyzko's discriminant bounds}
We may obtain an upper bounds for class numbers of number fields of root discriminant larger than $60.704$ by establishing lower bounds for sums over the prime ideals of the Hilbert class field.  The author's earlier paper \cite{Miller} treats this in detail.  We repeat here two lemmas that will be useful.

If we do not assume the generalized Riemann hypothesis, we have the following lemma.
\begin{lemma}\label{SplitPrimeLemmaNoRH}
Let $K$ be a totally real field of degree $n$, and let
$$F(x) = \frac{e^{-\left(\frac{x}{c}\right)^2}}{\cosh{\frac{x}{2}}}$$
for some positive constant $c$.  Suppose $S$ is a subset of the prime integers which totally split into principal prime ideals of $K$.  Let
\begin{align*}
B &= \frac{\pi}{2} + \gamma + \log{8\pi}  - \log \operatorname{rd}(K) - \int_0^\infty \frac{1-F(x)}{2}\left( \frac{1}{\sinh{\frac{x}{2}}} +\frac{1}{\cosh{\frac{x}{2}}} \right) \, dx \\ & \quad + 2 \sum_{p \in S} \sum_{m=1}^\infty \frac{\log p}{p^{m/2}}F(m \log p).
\end{align*}
If $B > 0$ then we have an upper bound for the class number $h$ of $K$,
$$h < \frac{2c \sqrt{\pi}}{nB}.$$
\end{lemma}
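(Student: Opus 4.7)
The plan is to apply Weil's explicit formula to the Dedekind zeta function $\zeta_H$ of the Hilbert class field $H$ of $K$, and to exploit the arithmetic of $S$ to extract an inequality that can be solved for $h$. By the corollary above, $\op{rd}(H)=\op{rd}(K)$; moreover $H$ is totally real of degree $N:=hn$, and every $p\in S$ splits completely in $H$. The splitting statement holds because $p$ decomposes in $K$ as $n$ principal prime ideals, and principal ideals have trivial Artin symbol in the unramified abelian extension $H/K$. Consequently each $p\in S$ contributes exactly $N$ primes of $H$, each of residue degree one and norm $p$.

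Specializing Weil's explicit formula for $\zeta_H$ to the test function $F$, one obtains after dividing by $N$ an identity whose principal ingredients are: the per-degree archimedean factor $\pi/2+\gamma+\log 8\pi-\int_0^\infty \frac{1-F(x)}{2}\bigl(\frac{1}{\sinh(x/2)}+\frac{1}{\cosh(x/2)}\bigr)dx$; the discriminant term $\log\op{rd}(H)=\log\op{rd}(K)$; a prime-power sum over primes of $H$; a sum $\frac{1}{N}\sum_\rho \widehat F(\rho)$ over the nontrivial zeros of $\zeta_H$; and a bounded remainder arising from the pole and the trivial zeros, evaluating to $2c\sqrt\pi/N$ for this $F$. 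The specific shape $F(x)=e^{-(x/c)^2}/\cosh(x/2)$ is engineered so that $\widehat F$ is non-negative not merely on the critical line (where the Gaussian alone would suffice) but throughout the strip $|\Im\rho|\le 1/2$ in which the zeros must lie; the extension to the strip follows by expanding $1/\cosh(x/2)$ as an alternating series in $e^{-x/2}$ and completing the square against the Gaussian. Consequently $\sum_\rho \widehat F(\rho)\ge 0$ unconditionally, and this term is discarded from the identity.

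Restricting the full prime-power sum of $\zeta_H$ to primes above $S$---the discarded terms are non-negative---and using the splitting count to rewrite the contribution of each $p\in S$ as $N\sum_m(\log p)p^{-m/2}F(m\log p)$, one recovers exactly the quantity $B$ on one side and the constant $2c\sqrt\pi/N$ on the other, i.e.\ $NB\le 2c\sqrt\pi$. Substituting $N=hn$ yields $h<2c\sqrt\pi/(nB)$ whenever $B>0$. The main obstacle is the analytic verification that $\widehat F$ is non-negative throughout the strip $|\Im\rho|\le 1/2$, which is precisely what permits the argument to proceed unconditionally rather than requiring GRH to pin the zeros to the critical line; once this positivity is in hand, the remainder of the proof is careful bookkeeping in Weil's explicit formula for $\zeta_H$, along the lines of the author's earlier paper \cite{Miller}.
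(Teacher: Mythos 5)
Your argument follows essentially the same route as the paper, which states this lemma without proof and defers to the explicit-formula method of \cite{Miller}: apply the Odlyzko--Poitou form of Weil's explicit formula to the Dedekind zeta function of the Hilbert class field $H$, use $\op{rd}(H)=\op{rd}(K)$ together with the complete splitting in $H$ of the principal split primes, discard the zero sum (nonnegative throughout the critical strip thanks to the $1/\cosh(x/2)$ factor) and the remaining prime terms, and keep the pole contribution $2c\sqrt{\pi}/N$ with $N=hn$. The bookkeeping and the resulting inequality $hnB \le 2c\sqrt{\pi}$ are correct.
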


On the other hand, if we assume the truth of the generalized Riemann hypothesis, we have the following lemma.
\begin{lemma}\label{SplitPrimeLemma}
Let $K$ be a totally real field of degree $n$, and let
$$F(x) = e^{-(\frac{x}{c})^2}$$
for some positive constant $c$.  Suppose $S$ is a subset of the prime integers which totally split into principal prime ideals of $K$.  Let
\begin{align*}
B &= \frac{\pi}{2} + \gamma + \log{8\pi}  - \log \operatorname{rd}(K) - \int_0^\infty \frac{1-F(x)}{2}\left( \frac{1}{\sinh{\frac{x}{2}}} + \frac{1}{\cosh{\frac{x}{2}}} \right) \, dx \\ & \quad + 2 \sum_{p \in S} \sum_{m=1}^\infty \frac{\log p}{p^{m/2}}F(m \log p).
\end{align*}
If $B > 0$ then we have, under the generalized Riemann hypothesis, an upper bound for the class number $h$ of $K$,
$$h < \frac{2c \sqrt{\pi} e^{(c/4)^2}}{nB}.$$
\end{lemma}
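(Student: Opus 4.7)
The plan is to apply Weil's explicit formula to the Dedekind zeta function $\zeta_H(s)$ of the Hilbert class field $H$ of $K$, with the Gaussian test function $F(x)=e^{-(x/c)^2}$. Three structural facts about $H$ power the argument: $[H:\Q]=hn$; $H$ is totally real (the Hilbert class field is unramified at all places, archimedean ones included); and, by the corollary above, $\op{rd}(H)=\op{rd}(K)$.

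First I would convert the splitting hypothesis on $S$ into a lower bound for the prime sum attached to $\zeta_H$. Each $p\in S$ factors in $K$ into $n$ principal primes of residue degree one, so by class field theory these primes split completely in the unramified abelian extension $H/K$, producing $hn$ distinct primes of $H$ of norm $p$. Positivity of $F$ then permits discarding every other prime contribution in $H$, yielding
\[
\sum_{\mathfrak{P}\subset\O_H}\sum_{m\geq 1}\frac{\log N\mathfrak{P}}{N\mathfrak{P}^{m/2}}F(m\log N\mathfrak{P}) \;\geq\; hn\sum_{p\in S}\sum_{m\geq 1}\frac{\log p}{p^{m/2}}F(m\log p).
\]

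Next I would feed this into the explicit formula for $\zeta_H$, in the form developed in \cite{Miller}. After dividing by $[H:\Q]=hn$, substituting $\op{rd}(H)=\op{rd}(K)$, and inserting the inequality above, every term not involving the nontrivial zeros of $\zeta_H$ assembles into precisely the quantity $B$, producing an inequality of the shape
\[
hn\,B \;\leq\; \Phi(0)+\Phi(1)-\sum_\rho\Phi(\rho),
\]
where $\Phi(s)=\int_{-\infty}^{\infty}F(x)e^{(s-\frac{1}{2})x}\,dx$ and the sum runs over the nontrivial zeros $\rho$ of $\zeta_H$.

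Under GRH, $\rho=\tfrac{1}{2}+i\gamma_\rho$, so $\Phi(\rho)=\widehat{F}(\gamma_\rho)$ is the Fourier transform of the Gaussian $F$ at a real point, which is itself a positive Gaussian; hence $\sum_\rho \Phi(\rho)\geq 0$. A short completing-the-square computation gives $\Phi(0)=\Phi(1)=c\sqrt{\pi}\,e^{(c/4)^2}$, and substituting yields $h<2c\sqrt{\pi}\,e^{(c/4)^2}/(nB)$ as claimed. The main obstacle is the bookkeeping — handled in \cite{Miller} and not repeated here — showing that the archimedean, discriminant, and polar contributions of Weil's formula combine to produce precisely $B$, and in particular that the kernel $\tfrac{1}{\sinh(x/2)}+\tfrac{1}{\cosh(x/2)}$ emerges correctly from the gamma factors of the completed $\zeta_H$. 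The GRH step itself is painless and only uses that the Fourier transform of a Gaussian is positive on $\R$.
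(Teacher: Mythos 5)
Your proposal is correct and follows essentially the same route as the source: the paper itself defers the proof of this lemma to \cite{Miller}, where it is derived exactly as you describe, by applying the Weil--Poitou explicit formula to the Dedekind zeta function of the Hilbert class field $H$ (totally real, degree $hn$, with $\op{rd}(H)=\op{rd}(K)$), using that principal totally split primes of $K$ split completely in $H$ to lower-bound the prime sum, discarding the zero sum via GRH and the positivity of the Gaussian's Fourier transform, and evaluating $\Phi(0)+\Phi(1)=2c\sqrt{\pi}e^{(c/4)^2}$. No gaps.
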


Given an element $x$ of a Galois number field $K$, we define its \emph{norm} to be
$$N(x) = \left | \prod_{\sigma \in \op{Gal}(K/\Q)} \sigma(x) \right |.$$
Note that if $x$ is in the ring of integers of $K$, and if its norm is a prime integer $p$ which is unramified in $K$, then $p$ totally splits into principal ideals, and we can take $p$ to be in the set $S$ above.  Once we find sufficiently many such prime integers which totally split into principal ideals, we can establish an upper bound for the class number.

Once upper bound is established, we can frequently use various ``push up" and ``push down" lemmas \cite{Linden, Masley} to pin down the exact class number.  However, our preference will be to appeal to the results of Schoof \cite{Schoof}.  In his ``Main Table," for each prime conductor less than $10,000$, he gives a number $\tilde{h}$ such that the class number $h$ either equals $\tilde{h}$, or $h > 80000 \cdot \tilde{h}$.  In particular, if our upper bound for $h$ is less than $80000$, then we have $h = \tilde{h}$.

\section{The class number of $\Q(\zeta_p+ \zeta_p^{-1})$ for primes $p = 71, 73, 79, 83$}
We recall a few facts about real cyclotomic fields of prime conductor.  Let $p$ be a prime integer, and let $\Q(\zeta_p+\zeta_p^{-1})$ denote the $p$th real cyclotomic field, i.e. the maximal real subfield of the cyclotomic field $\Q(\zeta_p)$, where $\zeta_p$ is a primitive $p$th root of unity.  The degree $n$ of  $\Q(\zeta_p+\zeta_p^{-1})$ is $n=(p-1)/2$, and its discriminant is given by
$$d(\Q(\zeta_p+\zeta_p^{-1})) = p^{\frac{1}{2}(p-3)}.$$
Thus, its root discriminant is
$$\op{rd}(\Q(\zeta_p+\zeta_p^{-1})) = p^{\frac{p-3}{p-1}}.$$
The prime integers which totally split in this field are precisely those which are congruent to $\pm 1$ modulo $p$.

The ring of integers of $\Q(\zeta_p+ \zeta_p^{-1})$ is $\Z[\zeta_p+ \zeta_p^{-1}]$.  Until otherwise noted, the integral basis that we will use is $\{b_0, b_1,\dots, b_{n-1}\}$, with $b_0 = 1$ and $b_j = 2 \cos{\frac{2 \pi j}{p}}$ for $j=1,\dots,n-1$.

For $p=71$, the root discriminant is approximately $62.85$, which too large for the Odlyzko bounds to be useful.  Our goal is to find to sufficiently many algebraic integers $x$ in $\Q(\zeta_{71} + \zeta_{71}^{-1})$ such that $N(x)$ is a prime integer congruent to $\pm 1$ modulo $p$, and then apply Lemma \ref{SplitPrimeLemmaNoRH}.

\begin{lemma}\label{PrimeLemma71}
In the real cyclotomic field $\Q(\zeta_{71}+\zeta_{71}^{-1})$, there exist algebraic integers of norms $283$, $569$, $709$, $853$, $1277$, $1279$, $1847$, $1987$, $2129$, and $2131$, i.e. the ten smallest prime integers that are congruent to $\pm 1$ modulo $71$.
\end{lemma}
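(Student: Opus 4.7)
The plan is to prove the lemma by explicit exhibition: for each of the ten primes $q \in \{283, 569, 709, 853, 1277, 1279, 1847, 1987, 2129, 2131\}$, I will produce an element $x_q \in \Z[\zeta_{71}+\zeta_{71}^{-1}]$ and then verify $|N(x_q)|=q$ by direct computation of the product of Galois conjugates using the formula
$$N(x) = \prod_{k=1}^{35} \left| a_0 + \sum_{j=1}^{34} a_j \cdot 2\cos\frac{2\pi j k}{71} \right|,$$
where $x = \sum_{j=0}^{34} a_j b_j$ in the integral basis defined in the text. Since each such $q$ splits completely into $35$ prime ideals of norm $q$ in $\O_K$, the existence problem is equivalent to showing that at least one prime above $q$ is principal and finding a generator.

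The core task is therefore to locate the $x_q$. I would carry out a guided computer search rather than a blind enumeration, in three steps. First, compute an LLL-reduced basis of $\O_K$ with respect to the $T_2$-norm (the sum of squares of the real embeddings), so that short integral combinations correspond to algebraic integers of small archimedean size, and hence of relatively small absolute norm. Second, apply the Fincke--Pohst enumeration algorithm to list integral vectors whose $T_2$-norm lies below a carefully chosen threshold $T$, chosen so that by AM--GM the absolute norm is bounded above by $(T/35)^{35/2}$, comfortably covering all ten target primes. Third, for each enumerated candidate $x$, compute $N(x)$ exactly (e.g.\ as the resultant of the minimal polynomial of $\zeta_{71}+\zeta_{71}^{-1}$ with a lift of $x$ to $\Z[y]$) and record hits against the ten target primes, continuing until all ten are represented.

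The main obstacle is purely computational: the field has degree $35$, so naive enumeration over coefficient vectors $(a_0,\ldots,a_{34})$ with $|a_j|\le M$ is hopeless even for $M=2$. Success depends on the quality of the LLL reduction of $\O_K$ in the Minkowski embedding, since Fincke--Pohst's cost is governed by how well the Gram--Schmidt norms decrease. Experience with cyclotomic fields suggests that the cyclotomic units yield a well-conditioned sublattice, and that shifting by such units keeps successive Fincke--Pohst calls short. A useful sanity check along the way is that one expects roughly $35/h$ of the enumerated short vectors at each norm level to hit a split prime of that norm; since the lemma, once combined with Lemma \ref{SplitPrimeLemmaNoRH}, will ultimately imply $h=1$, the search should in fact be generous with hits.

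Once all ten $x_q$ have been found and their norms verified symbolically, the proof consists simply of listing the coefficient vectors $(a_0,\ldots,a_{34})$ for each $x_q$ and asserting that the displayed norm computation gives the claimed prime. No further theoretical input is needed.
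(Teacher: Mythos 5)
Your overall strategy is the right one and matches the paper's in spirit: the lemma is proved by exhibiting, for each of the ten primes $q$, an explicit element of $\Z[\zeta_{71}+\zeta_{71}^{-1}]$ and verifying that its norm (the product over the $35$ real embeddings, exactly as in your displayed formula) equals $q$. Your proposed search machinery differs from the paper's: you suggest LLL-reducing $\O_K$ under the $T_2$-form and running Fincke--Pohst below a norm threshold, whereas the paper simply enumerates ``sparse vectors'' in the basis $\{b_0, b_1, \dots, b_{34}\}$, $b_j = 2\cos\frac{2\pi j}{71}$, with almost all coefficients zero and the rest $\pm 1$. The paper's elements turn out to be extremely sparse (e.g.\ $b_0 + b_1 - b_6$ has norm $283$ and $b_0+b_1+b_8$ has norm $569$), so the naive sparse search is entirely adequate here; your lattice-reduction approach is more systematic and would scale better, but it is not needed at conductor $71$.

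The genuine gap is that your proposal never actually produces the ten elements, and for this lemma the exhibited elements \emph{are} the proof --- everything else is scaffolding. The statement is a bare existence claim about specific algebraic integers; a proof must either display witnesses whose norms can be checked, or give a theoretical argument that such elements exist (you give neither --- your heuristic about ``roughly $35/h$ of the enumerated short vectors'' hitting each norm level is circular here, since $h=1$ is what the lemma is ultimately used to establish, and in any case a heuristic expectation is not a proof of existence). As written, your text is a description of a computation you intend to perform, with nothing a reader can verify. To close the gap you must carry the search to completion and record the coefficient vectors, as the paper does in its table; once those are listed, the verification of each norm is a finite exact computation and the proof is complete.
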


\begin{proof}
Let $b_0 = 1$ and $b_j = 2 \cos{\frac{2 \pi j}{71}}$ for $j$ from $1$ to $34$.  Then $\{b_0, b_1,\dots, b_{34}\}$ is an integral basis of $\Q(\zeta_{71}+\zeta_{71}^{-1})$.  We search over ``sparse vectors," where almost all the coefficients are zero, and the remaining coefficients are $\pm 1$.  We find the following ten elements and their norms:

\begin{center}
\begin{tabular}{| l | r| }
\hline
  Element & Norm \\ \hline
  $b_0 + b_1 - b_6$                  & 283 \\
  $b_0 + b_1 + b_8$                 & 569 \\
  $b_0 + b_1 + b_4 - b_{22}$  & 709 \\
  $b_0 + b_1 - b_5$                  & 853 \\
  $b_0 + b_1 + b_3 + b_{12}$ & 1277 \\
  $b_0 + b_1 - b_8 - b_{13}$   & 1279 \\
  $b_0 + b_1 + b_2 - b_{28}$  & 1847 \\
  $b_0 + b_1 - b_{13}$             & 1987\\
  $b_0 + b_1 - b_3 + b_{10}$  & 2129\\
  $b_0 + b_1 - b_{27}$             & 2131\\
\hline
\end{tabular}
\end{center}

\end{proof}

\begin{proposition}\label{Prop71}
The class number of $\Q(\zeta_{71}+\zeta_{71}^{-1})$ is $1$.
\end{proposition}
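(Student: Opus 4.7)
The plan is to apply Lemma \ref{SplitPrimeLemmaNoRH} directly to $K = \Q(\zeta_{71}+\zeta_{71}^{-1})$, using for $S$ the ten prime integers whose principal prime factors were exhibited in Lemma \ref{PrimeLemma71}. Since each element listed there has prime norm, and each such prime is congruent to $\pm 1 \pmod{71}$ (and hence unramified and totally split in $K$), the remark after Lemma \ref{SplitPrimeLemma} guarantees that each of $283, 569, 709, 853, 1277, 1279, 1847, 1987, 2129, 2131$ splits completely into principal prime ideals, so these are legitimate members of $S$.

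With $n = 35$ and $\log \op{rd}(K) = \tfrac{68}{70}\log 71$, the constant $\tfrac{\pi}{2} + \gamma + \log(8\pi) - \log \op{rd}(K)$ can be computed explicitly; it is modestly negative, so the role of the sum over $S$ is to overcome this deficit plus the contribution of the integral. Next I would select a positive constant $c$ (empirically, something of moderate size, chosen to balance the fact that a larger $c$ makes both the numerator $2c\sqrt{\pi}/(nB)$ and the sum $2\sum_{p,m} p^{-m/2}(\log p) F(m\log p)$ grow, while also enlarging the integral $\int_0^\infty \tfrac{1-F(x)}{2}(\operatorname{csch}(x/2)+\operatorname{sech}(x/2))\,dx$) and then numerically evaluate the integral and the double sum using $F(x) = e^{-(x/c)^2}/\cosh(x/2)$. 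For each $p \in S$ the inner sum in $m$ converges very rapidly, so only a handful of terms are needed in practice.

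The verification then reduces to two numerical checks: first, that the resulting $B$ is strictly positive, and second, that the ensuing bound $h < 2c\sqrt{\pi}/(nB)$ falls below the Schoof threshold of $80000$. Once both are established, Schoof's Main Table \cite{Schoof} lists $\tilde h = 1$ for conductor $71$, and the dichotomy $h = \tilde h$ or $h > 80000\,\tilde h$ forces $h = 1$.

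The main obstacle is the delicate numerical margin: the root discriminant $p^{(p-3)/(p-1)} \approx 62.85$ already exceeds the unconditional Odlyzko cutoff $60.704$, so the ten split primes from Lemma \ref{PrimeLemma71} must contribute enough to the sum to push $B$ above zero while keeping $2c\sqrt{\pi}/(nB)$ comfortably under $80000$. A poor choice of $c$ can easily make $B$ negative or the bound useless, so the crux of the proof is essentially an optimization of $c$ against the data of Lemma \ref{PrimeLemma71}; once a workable $c$ is identified, the remaining steps are straightforward numerical evaluations.
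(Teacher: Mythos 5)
Your proposal follows essentially the same route as the paper: take $S$ to be the ten split primes from Lemma \ref{PrimeLemma71}, choose a suitable $c$ (the paper uses $c=15$), verify numerically that $B>0$ (the paper gets $B > 0.1796$, yielding $h < 9$), and conclude via Schoof's table. One small correction: the constant $\frac{\pi}{2}+\gamma+\log(8\pi)-\log\op{rd}(K) \approx 1.23$ is actually positive, and it is only after subtracting the integral (about $1.296$) that a small deficit remains for the prime sum to overcome.
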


\begin{proof}
Let $F$ be the function
$$F(x) = \frac{e^{-\left(\frac{x}{c}\right)^2}}{\cosh{\frac{x}{2}}}$$
with $c = 15$.  We have the following lower found for the contribution of prime ideals,
$$ 2 \sum_{p \in S} \sum_{m=1}^\infty \frac{\log p}{p^{m/2}}F(m \log p) > 2 \sum_{p \in S} \frac{\log p}{\sqrt{p}}F(\log p) > 0.2448.$$
The following integral can be calculated using numerical integration:
$$\int_0^\infty \frac{1-F(x)}{2}\left( \frac{1}{\sinh{\frac{x}{2}}} + \frac{1}{\cosh{\frac{x}{2}}} \right) \, dx <1.2964.$$
We have
\begin{align*}
B = \frac{\pi}{2} &+ \gamma + \log{8\pi}  - \log \operatorname{rd}(K) - \int_0^\infty \frac{1-F(x)}{2}\left( \frac{1}{\sinh{\frac{x}{2}}} + \frac{1}{\cosh{\frac{x}{2}}} \right) \, dx \\& + 2 \sum_{p \in S} \sum_{m=1}^\infty \frac{\log p}{p^{m/2}}F(m \log p) > 0.1796.
\end{align*}
By Lemma \ref{SplitPrimeLemmaNoRH}, the class number is less than $9$.  Applying Schoof's table \cite{Schoof}, or using more elementary algebraic arguments, we find that the class number is $1$.
\end{proof}

The proofs for conductors $73$, $79$ and $83$ are entirely similar.  It is straightforward to find algebraic integers with the 10 smallest prime norms congruent to $\pm 1$ modulo $p$.  This is sufficient to get an upper bound less than $80000$.  Using Schoof's table, we find that each has class number $1$.

\section{The class number of $\Q(\zeta_p + \zeta_p^{-1})$, prime $p$, $89 \leq p \leq 131$}
We first consider the conductor $131$.  The root discriminant of $\Q(\zeta_{131} + \zeta_{131}^{-1})$ is approximately $121.53$.  Quite a few prime ideals will be required in order to have a sufficiently large contribution to establish an upper bound for the class number.

\begin{proposition}\label{Prop131}
The class number of $\Q(\zeta_{131}+\zeta_{131}^{-1})$ is $1$.
\end{proposition}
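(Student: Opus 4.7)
The plan is to mirror the strategy used for $p = 71$ in Proposition \ref{Prop71}, but since the root discriminant of $K = \Q(\zeta_{131}+\zeta_{131}^{-1})$ is now roughly $121.53$, a much larger contribution from the prime-ideal sum in Lemma \ref{SplitPrimeLemmaNoRH} will be needed to force $B > 0$. Concretely, the quantity $\log\op{rd}(K) \approx 4.80$ now exceeds $\tfrac{\pi}{2} + \gamma + \log 8\pi \approx 5.37$ by less than one, but once the integral term (typically $\gtrsim 1.3$ for the values of $c$ one wants to use) is subtracted, the prime sum must overcome a deficit on the order of $1$.

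First I would fix the integral basis $\{b_0,b_1,\dots,b_{64}\}$ of $\Z[\zeta_{131}+\zeta_{131}^{-1}]$ as in Section 4, where $b_0 = 1$ and $b_j = 2\cos\tfrac{2\pi j}{131}$ for $1 \le j \le 64$. Then I would run a sparse-vector search over elements of the form $\sum \varepsilon_j b_j$ with $\varepsilon_j \in \{-1,0,1\}$ and only a few nonzero coordinates, compute the norm of each candidate using the standard conjugate-product formula (or via resultants), and record those whose norms are prime and congruent to $\pm 1 \pmod{131}$. Because these primes totally split in $K$ and the element realizing the norm generates a principal prime above them, each such prime may be placed in the set $S$ of Lemma \ref{SplitPrimeLemmaNoRH}. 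The main computational obstacle is that many more such primes are required than for $p=71$ — likely several dozen, since each individual term $\tfrac{\log p}{\sqrt p} F(\log p)$ is small and the deficit is significant — and as one goes to larger norms, sparse $\pm 1$ combinations thin out, so slightly denser search templates (coefficients up to $\pm 2$, or three to five nonzero entries) will probably be necessary for the larger primes.

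With the list of prime-norm elements in hand, I would choose the constant $c$ in $F(x) = e^{-(x/c)^2}/\cosh(x/2)$ to balance the two competing effects: larger $c$ increases both the prime sum and the integral term while weakening the final bound $2c\sqrt\pi/(nB)$. A value of $c$ somewhat larger than the $c = 15$ used in Proposition \ref{Prop71} (to give more weight to the many moderately large primes contributing to $S$) should be optimal. I would then compute (i) $\log \op{rd}(K) = \tfrac{128}{130}\log 131$, (ii) the integral $\int_0^\infty \tfrac{1-F(x)}{2}\bigl(\tfrac{1}{\sinh(x/2)} + \tfrac{1}{\cosh(x/2)}\bigr)\,dx$ numerically, and (iii) the prime sum $2\sum_{p\in S}\sum_{m\ge 1} \tfrac{\log p}{p^{m/2}} F(m\log p)$ (only the $m=1$ term is significant, and a lower bound suffices), verifying that the resulting $B$ is strictly positive.

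Once $B>0$ is established, Lemma \ref{SplitPrimeLemmaNoRH} yields an explicit upper bound on the class number $h$ of the form $h < 2c\sqrt\pi/(nB)$ with $n = 65$. Provided the resulting bound is below $80000$, Schoof's Main Table \cite{Schoof} identifies $h$ exactly, and for $p = 131$ the tabulated $\tilde h$ is $1$, completing the proof. The crux of the argument is thus entirely computational: ensuring the sparse-vector search produces enough small primes $\equiv \pm 1 \pmod{131}$ to make $B$ positive at a usable value of $c$.
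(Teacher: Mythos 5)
Your plan is the same as the paper's: a sparse-vector search in the basis $\{b_0,\dots,b_{64}\}$ for elements whose norms are primes $\equiv \pm 1 \pmod{131}$, followed by Lemma \ref{SplitPrimeLemmaNoRH} and Schoof's table. The one place where your proposal as written would fail is the quantitative scale of the prime search. Since each split prime $p$ contributes roughly $4\log p/p$ to the sum (for large $c$ the factor $F(\log p)\approx 1/\cosh(\tfrac{1}{2}\log p)\approx 2p^{-1/2}$), and the deficit to be overcome is about $0.57 + 1.26 \approx 0.7$, a Mertens-type estimate shows one needs essentially \emph{all} split primes up to about $2\times 10^{7}$ --- the paper uses $12{,}087$ primes below $20{,}000{,}000$, found by allowing up to six extra $\pm 1$ coefficients beyond $b_0+b_1$, not ``several dozen.'' Correspondingly, the paper takes $c=1000$ rather than a value near $15$: with thousands of primes of size up to $10^7$ contributing, the Gaussian cutoff must be pushed far out or their contributions are killed, and even so one only gets $B>0.015$ and a class number bound of $3636$ (still comfortably below $80000$, so Schoof's table applies). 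With your envisioned parameters the verification step ``check $B>0$'' would simply return a negative number; the method survives only because, as you note, one can keep enlarging $S$, but you should be aware that the enlargement required is by more than two orders of magnitude.
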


\begin{proof}
Let $n$ denote the degree of the field.  We calculate the norm of every element of the form
$$x = b_0 + b_1 + a_1 b_{j_1} + a_2 b_{j_2} + a_3 b_{j_3} + a_4 b_{j_4} + a_5 b_{j_5} + a_6 b_{j_6},$$
where $1 < j_1 < j_2 < j_3 < j_4 < j_5 < j_6 < n$ and $a_j \in \{-1, 0, 1\}$ for $1 \leq j \leq 6$.  By searching these sparse vectors $x$ and calculating their norms, we find 12,087 prime integers that are less than 20,000,000 and congruent to $\pm 1$ modulo $131$.

Let $S$ be the set of those 12,087 primes, and let
$$F(x) = \frac{e^{-\left(\frac{x}{c}\right)^2}}{\cosh{\frac{x}{2}}}$$
with $c = 1000$.  We have the following lower found for the contribution of prime ideals,
$$ 2 \sum_{p \in S} \sum_{m=1}^\infty \frac{\log p}{p^{m/2}}F(m \log p) > 2 \sum_{p \in S} \sum_{m=1}^2 \frac{\log p}{p^{m/2}}F(m \log p) > 0.708.$$
By numerical integration, we have
$$\int_0^\infty \frac{1-F(x)}{2}\left( \frac{1}{\sinh{\frac{x}{2}}} + \frac{1}{\cosh{\frac{x}{2}}} \right) \, dx < 1.264.$$
We have $B > 0.015$, so by applying  Lemma \ref{SplitPrimeLemmaNoRH} we find that the class number is less than $3636$.  We can now use Schoof's table to find that the class number is $1$.
\end{proof}

The proof for prime conductors between $89$ and $127$ is entirely similar, and we find that all have class number $1$.

\section{The class number of $\Q(\zeta_p+ \zeta_p^{-1})$ for primes $p = 137, 139, 149, 151$}
As the root discriminant of the fields increases, so does the required contribution from the prime ideals.  To find sufficiently many split primes in fields of larger discriminant, it is often quicker to additionally search over sparse vectors using an alternative basis.  A useful alternative to the basis $b_0, b_1, \dots, b_{n-1}$ is
$$c_k = \sum_{j=0}^k b_j, \quad k=0, 1,\dots, n-1.$$

\begin{proposition}\label{Prop151}
The class number of $\Q(\zeta_{151}+\zeta_{151}^{-1})$ is $1$.
\end{proposition}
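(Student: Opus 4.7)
The plan is to mirror the argument used for $p=131$ in Proposition \ref{Prop131}, but with a substantially larger pool of totally-split principal primes, since the root discriminant of $\Q(\zeta_{151}+\zeta_{151}^{-1})$ is $151^{148/150} \approx 141.4$, noticeably larger than the $121.5$ handled in that proof. First I would record this root discriminant explicitly and note that $\log \op{rd}(K)$ is large enough that the ``deficit'' $\frac{\pi}{2} + \gamma + \log 8\pi - \log \op{rd}(K) - I$ in the expression for $B$ (with $I$ the transcendental integral from Lemma \ref{SplitPrimeLemmaNoRH}) is more strongly negative than in the $p=131$ case; the sum over split principal primes must therefore contribute correspondingly more.

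Next I would harvest totally-split principal primes by a sparse-vector search, following the prescription in the paragraph preceding the proposition. Concretely, I would enumerate elements
$$x = b_0 + b_1 + \sum_{i=1}^{s} a_i b_{j_i}, \qquad a_i \in \{-1,0,1\}, \quad 1 < j_1 < \cdots < j_s < n,$$
for $s$ up to roughly $6$ or $7$, and in parallel the analogous family obtained by replacing the $b_j$ with the alternative basis vectors $c_k = \sum_{j=0}^{k} b_j$ introduced just above. Each $x \in \Z[\zeta_{151}+\zeta_{151}^{-1}]$ whose norm is a rational prime $q$ witnesses that $q \equiv \pm 1 \pmod{151}$ splits into principal prime ideals, so $q$ may be placed in the set $S$. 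The point of using two bases is that each basis gives fast access to a different subset of small prime norms, and taking their union enlarges $S$ far beyond what a single-basis search produces, which is the key trick in this range.

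Having fixed $S$, I would then apply Lemma \ref{SplitPrimeLemmaNoRH} with
$$F(x) = \frac{e^{-(x/c)^2}}{\cosh{\frac{x}{2}}}$$
for a constant $c$ of comparable order to the $c=1000$ used at $p=131$ (tuned to balance the prime sum against the factor $2c\sqrt{\pi}/(nB)$ appearing in the final bound), verify by numerical integration that the integral term is on the order of $1.26$, and check that the prime-sum lower bound, estimated crudely by retaining only $m=1,2$, exceeds the deficit so that $B > 0$. Lemma \ref{SplitPrimeLemmaNoRH} then delivers an explicit upper bound for $h$; provided $B$ is not pathologically small this bound will be well under $80{,}000$. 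A final appeal to Schoof's Main Table \cite{Schoof}, whose entry $\tilde h$ for $p=151$ is $1$, upgrades the upper bound to the exact value $h = 1$.

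The main obstacle is precisely the one the author flags before stating the proposition: assembling enough totally-split principal primes of modest size to overwhelm the larger deficit contributed by $\log \op{rd}(K)$. With only the $b_j$ basis and the sparse patterns used for $p=131$ one risks landing with $B \le 0$ and no usable bound; adding the $c_k$-basis search is what makes the inequality go through at $p=151$. Once a sufficient $S$ is collected, the remaining work is routine numerics and the table lookup.
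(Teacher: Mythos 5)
Your overall framework (sparse-vector search in the two bases, Lemma \ref{SplitPrimeLemmaNoRH}, then Schoof's table) is the right skeleton, but there is a genuine gap: you assume that the direct search --- even augmented by the alternative basis $c_k$ --- yields enough totally-split principal primes to make $B>0$, and the paper explicitly reports that it does not. In the actual proof the primes found directly (the set the author calls $S_1$, consisting of prime values among the non-$151$ parts of the norms $N(x)$ for sparse $x$ in either basis) make an \emph{insufficient} contribution. The step you are missing is the quotient construction: one also records sparse elements $x$ whose norm factors as $pq$ with $q \in S_1$ and $p$ a new prime $\equiv \pm 1 \pmod{151}$; since some $y \in \O$ has norm $q$, the element $x/\sigma(y)$ is an algebraic integer of norm $p$ for a suitable Galois automorphism $\sigma$, so $p$ is certified to split into principal primes and may be adjoined to $S$ as a member of $S_2$. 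It is $S = S_1 \cup S_2$ (with $c=115$, giving a prime contribution exceeding $0.8745$, $B > 0.0316$, and $h < 171$) that closes the argument; your claim that ``adding the $c_k$-basis search is what makes the inequality go through at $p=151$'' misattributes the decisive idea.

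Two smaller points. First, since $(151)$ is totally ramified, norms of sparse vectors are frequently divisible by $151$; the paper handles this by dividing by an explicit element of norm $151$ such as $2b_0 - b_1$ and working with the non-$151$ part of each norm, a normalization your search would also need. Second, your suggestion to take $c$ ``of comparable order to the $c=1000$ used at $p=131$'' is off: the paper uses $c=115$ here, and the choice matters because the final bound scales like $2c\sqrt{\pi}/(nB)$, so an unnecessarily large $c$ with a small $B$ could push the bound past the $80000$ threshold needed to invoke Schoof's table.
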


\begin{proof}
Let $n$ denote the degree of the field, and let $\O$ denote the ring of integers.

We first consider all $x \in \O$ of the forms
$$x = b_0 + b_1 + a_1 b_{j_1} + a_2 b_{j_2} + a_3 b_{j_3} + a_4 b_{j_4} + a_5 b_{j_5} + a_6 b_{j_6},$$
and
$$x = b_1 + a_1 b_{j_1} + a_2 b_{j_2} + a_3 b_{j_3} + a_4 b_{j_4} + a_5 b_{j_5} + a_6 b_{j_6},$$
where $1 < j_1 < j_2 < j_3 < j_4 < j_5 < j_6 < n$ and $a_j \in \{-1, 0, 1\}$ for $1 \leq j \leq 6$.

We also search over the alternative basis,
$$x = c_0 + a_1 c_{k_1} + a_2 c_{k_2} + a_3 c_{k_3} + a_4 c_{k_4} + c_5 b_{k_5} + c_6 b_{k_6},$$
where $1 \leq k_1 < k_2 < k_3 < k_4 < k_5 < k_6 < n$ and $a_k \in \{-1, 0, 1\}$ for $1 \leq k \leq 6$.

Let $T$ denote the set of all such elements $x$.

The ideal $(p)$ is totally ramified.  Thus, if $x \in \O$ has norm $N(x)$ divisible by $p$, we can divide $x$ by any element of norm $p$, say $2b_0 - b_1$, to get an algebraic integer $(2b_0 - b_1)^{-1} x$ with norm $N(x)/p$.  Therefore consider the non-$p$ parts of all norms $N(x)$.  Initially we search these sparse vectors to find norms which are less than $10^{15}$ and congruent to $\pm 1$ modulo $151$.  We define the set $U$ to be
$$U = \{\text{non-}p\text{ part of } N(x) | x \in T, N(x) < 10^{15} \}.$$

Let $S_1$ be the set of primes
$$S_1 = \{m : m \in U, m \text{ prime, }  m \equiv \pm 1 \,(\bmod \,151)\}.$$

Unfortunately, we find that primes of $S_1$ make an insufficient contribution.  We could attempt to search over sparse vectors with more nonzero coefficients, but this is very time consuming.  Instead we find elements of larger norm, and take quotients, as will be described below.

 Let $S_2$ be the set of primes defined by
$$S_2 = \{p : pq \in U, p \text{ prime, } p \notin S_1, q \in S_1  \},$$
noting that if $N(x) = pq$ and $N(y)=q$, for $x, y \in \O$, then $\frac{x}{\sigma(y)}$ is in $\O$ with norm $p$ for some Galois automorphism $\sigma$.

Now put $S = S_1 \cup S_2$ and $c = 115$.   We have the following lower found for the contribution of prime ideals,
$$ 2 \sum_{p \in S} \sum_{m=1}^\infty \frac{\log p}{p^{m/2}}F(m \log p) > 2 \sum_{p \in S} \sum_{m=1}^2 \frac{\log p}{p^{m/2}}F(m \log p) > 0.8745.$$
Applying Lemma \ref{SplitPrimeLemmaNoRH}, we have $B > 0.0316$, so the class number is less than $171$.  We can now use Schoof's table to find the class number is $1$.
\end{proof}

The proofs for conductors $137$, $139$ and $149$ are entirely similar.  We determine that all have class number $1$, and have proved the first statement of Theorem \ref{MainResult}.

\section{The class number of $\Q(\zeta_p + \zeta_p^{-1})$ for primes $p$, $167 \leq p \leq 193$}
For prime conductors greater than $151$, we will assume the generalized Riemann hypothesis.  Using Odlyzko's discriminant bounds, van der Linden proved (under GRH) that the class number of $\Q(\zeta_p + \zeta_p^{-1})$ is $1$ for $p < 163$ and is $4$ for $p = 163$ \cite{Linden}.

Although the question of the class number of $\Q(\zeta_{167} + \zeta_{167}^{-1})$ has remained open, we can find its class number as a direct consequence of Schoof's results \cite{Schoof}.  Indeed, the root discriminant of $\Q(\zeta_{167} + \zeta_{167}^{-1})$ is only $162.93...$, so it is small enough for Odlyzko's discriminant bounds to establish an upper bound for the class number, without any recourse to knowledge of the prime ideals.  Choosing the pair $(A,E) = (170.633,  4790.3)$ from Odlyzko's table ``Table 3: GRH Bounds for Discriminants" \cite{OdlyzkoTable}, we get an upper bound for the class number $h$,
$$h < \frac{E}{n(\log{A} - \log{\op{rd}(\Q(\zeta_{167} + \zeta_{167}^{-1}))})} < 1208.$$
Since $h < 80000$, we can use Schoof's table to prove $h = 1$ for $\Q(\zeta_{167} + \zeta_{167}^{-1})$.

Entirely similarly, we can use the Odlyzko GRH bounds together with Schoof's table to determine the class numbers of prime conductor between $173$ and $193$, all of which are $1$ except for $\Q(\zeta_{191} + \zeta_{191}^{-1})$ which has class number $11$.

\section{The class number of $\Q(\zeta_p + \zeta_p^{-1})$ for primes $p$, $197 \leq p \leq 241$}
The root discriminant of $\Q(\zeta_{197} + \zeta_{197}^{-1})$ is only $186.66...$, so we can also apply Odlyzko's GRH bounds here to get an upper bound of $h < 152927$.  Unfortunately, this bound is not less than $80000$, so we are not yet able use Schoof's table.  To get a better upper bound for $h$, we will study the prime ideals of the field and apply Lemma \ref{SplitPrimeLemma}.

\begin{proposition}
Under the generalized Riemann hypothesis, the class number of \\$\Q(\zeta_{197} + \zeta_{197}^{-1})$ is $1$.
\end{proposition}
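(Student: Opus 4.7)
The plan is to mimic the structure of Proposition \ref{Prop151}, but invoke the GRH version (Lemma \ref{SplitPrimeLemma}) since the root discriminant $\op{rd}(\Q(\zeta_{197}+\zeta_{197}^{-1})) \approx 186.66$ is too large for the unconditional lemma to be practical. The target is an upper bound for $h$ that falls below Schoof's threshold of $80{,}000$, after which Schoof's table will force $h = 1$.

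First I would set up the search. With $n = 98$, I would enumerate sparse vectors
$$x = b_0 + b_1 + \sum_{i=1}^{r} a_i b_{j_i}, \qquad a_i \in \{-1, 0, 1\}, \quad 1 < j_1 < \dots < j_r < n,$$
as well as the analogous families with $b_0$ removed and with the alternative basis $\{c_k\}$, pushing $r$ up to $6$ or $7$ as time permits. Each norm computation yields a candidate; I keep only the prime norms that are $\equiv \pm 1 \pmod{197}$ (since these are exactly the rational primes that split completely, and coming from $N(x)$ guarantees they split into principal ideals). Since $(197)$ is totally ramified, any factor of $197$ appearing in a norm can be removed by dividing by a fixed element of norm $197$ (such as $2b_0 - b_1$), so I consider the non-$p$ part of $N(x)$. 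Call the resulting collection of primes $S_1$.

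If $S_1$ alone is insufficient, I would augment with the quotient trick used in Proposition \ref{Prop151}: from any sparse $x$ with $N(x) = pq$ where $q \in S_1$ and $p$ is prime with $p \equiv \pm 1 \pmod{197}$, pick $y$ with $N(y) = q$ and form $x/\sigma(y) \in \O$ of norm $p$ for an appropriate $\sigma \in \op{Gal}$; this yields an additional set $S_2$. Setting $S = S_1 \cup S_2$, I then choose $F(x) = e^{-(x/c)^2}$ with $c$ tuned (somewhere in the range suggested by the earlier propositions, e.g.\ $c$ of order $10^2$), and evaluate the two pieces: the integral
$$\int_0^\infty \frac{1-F(x)}{2}\left( \frac{1}{\sinh(x/2)} + \frac{1}{\cosh(x/2)} \right) dx$$
by numerical integration, and the prime sum $2\sum_{p \in S}\sum_{m \geq 1} \frac{\log p}{p^{m/2}} F(m\log p)$, truncated at $m \leq 2$ for a rigorous lower bound. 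The goal is $B > 0$ in Lemma \ref{SplitPrimeLemma}; then $h < 2c\sqrt{\pi}\, e^{(c/4)^2}/(nB)$ must come in below $80{,}000$.

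The main obstacle is quantitative rather than conceptual: the required prime-ideal contribution must absorb not only $\log \op{rd}(K) \approx 5.23$ but also the extra $e^{(c/4)^2}$ penalty in the GRH bound, and it must beat $80{,}000$ rather than merely the trivial Odlyzko bound of $152{,}927$. This forces $|S|$ to be substantially larger than in the $p = 131$ or $p = 151$ cases, and the sparse-vector search may have to run with an additional nonzero coefficient and in both bases simultaneously. Once enough split primes are accumulated to push $B$ comfortably above zero with the chosen $c$, Lemma \ref{SplitPrimeLemma} supplies the upper bound and Schoof's table \cite{Schoof} collapses $h$ to $1$, completing the proposition.
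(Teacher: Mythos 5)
Your overall strategy matches the paper's: find rational primes $\equiv \pm 1 \pmod{197}$ arising as norms of algebraic integers, feed them into Lemma \ref{SplitPrimeLemma}, and finish with Schoof's table. But two quantitative judgments in your plan are off, and one of them would sink the computation if followed literally. You propose taking $c$ ``of order $10^2$,'' apparently extrapolating from the unconditional cases ($c=1000$ for $p=131$, $c=115$ for $p=151$). Those values are only viable for Lemma \ref{SplitPrimeLemmaNoRH}, whose class number bound is $2c\sqrt{\pi}/(nB)$ with no exponential penalty. In the GRH lemma the bound is $2c\sqrt{\pi}\,e^{(c/4)^2}/(nB)$, and with $c=100$ the factor $e^{(c/4)^2}=e^{625}$ makes the bound vacuous no matter how large you make $S$. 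You even name this penalty as an obstacle, but your suggested $c$ is incompatible with it: every GRH proposition in the paper takes $c$ between roughly $9$ and $11.5$.

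The second misjudgment is the claim that $|S|$ must be ``substantially larger'' than in the $p=131$ or $p=151$ cases. The opposite is true, and for the same structural reason: the unconditional test function $F(x)=e^{-(x/c)^2}/\cosh(x/2)$ suppresses each prime's contribution by an extra factor of roughly $2p^{-m/2}$, so the term for a split prime $p$ is of size $\log p/p^m$ and you need thousands of primes. Under GRH, $F(x)=e^{-(x/c)^2}$ has no such suppression, each term is of size $\log p/p^{m/2}$, and a single prime of modest norm already contributes a few tenths to $B$. The paper's proof exhibits one element, $b_0+b_1+b_2-b_{10}+b_{33}-b_{70}-b_{83}$, of prime norm $1181$, takes $S=\{1181\}$ and $c=9.25$, gets $h<1027$, and concludes with Schoof's table. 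No quotient trick, no alternative basis, no mass search is needed here. Your plan would eventually work if you corrected $c$ to the single-digit range, but as written the parameter choice is a genuine error, and the anticipated scale of the search is off by several orders of magnitude.
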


\begin{proof}
Searching for an algebraic integer with small prime norm congruent to $\pm 1$ modulo $197$, we find the element
$$b_0 + b_1 + b_2 - b_{10} + b_{33} - b_{70} - b_{83}$$
which has norm $1181$.

Assuming the generalized Riemann hypothesis, we apply Lemma \ref{SplitPrimeLemma} using $S=\{1181\}$ and $c = 9.25$ to prove that the class number is less than $1027$.  Now we use Schoof's table to find that the class number is $1$. 
\end{proof}

\begin{proposition}
Under the generalized Riemann hypothesis, the class number of \\$\Q(\zeta_{199} + \zeta_{199}^{-1})$ is $1$.
\end{proposition}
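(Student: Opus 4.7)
The plan is to mirror the approach used for $\Q(\zeta_{197} + \zeta_{197}^{-1})$ in the previous proposition, since $199$ is only slightly larger and the field has root discriminant $199^{196/198} \approx 188.7$. First I would check whether Odlyzko's GRH bound alone suffices: applying the formula $h < E/(n(\log A - \log \op{rd}(K)))$ with the largest allowable $(A,E)$ pair from Odlyzko's GRH table will certainly give \emph{some} upper bound, but based on the $p = 197$ calculation (which already gave $h < 152927$, too large for Schoof's table), I expect the resulting bound to exceed $80000$. Thus bare Odlyzko bounds are insufficient, and I will need to incorporate information from split prime ideals via Lemma \ref{SplitPrimeLemma}.

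Next I would search for algebraic integers of the form $b_0 + b_1 + \sum a_i b_{j_i}$ (with $a_i \in \{-1,0,1\}$ and sparse support) whose norm is a prime integer congruent to $\pm 1 \pmod{199}$. By the remark following Lemma \ref{SplitPrimeLemma}, any such prime is totally split into principal prime ideals and may be placed in the set $S$. The smallest candidate prime norms are $397$, $797$, $1193$, and so on. Based on the $p=197$ case, where a single prime norm $1181$ was enough, I expect that one or a small handful of such elements will suffice here. I would perform the sparse-vector search using the basis $b_0, \ldots, b_{n-1}$ (and, if needed, the alternative basis $c_0, \ldots, c_{n-1}$ from Section 6) until a sufficiently small split prime is found.

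With the set $S$ in hand, I would choose a suitable positive constant $c$ (of the same order as in the $p=197$ case, say around $c \approx 9$--$10$) and compute the three ingredients entering $B$: the root discriminant term $\log \op{rd}(K) = \frac{197}{198}\log 199$; the numerical integral $\int_0^\infty \tfrac{1-F(x)}{2}\bigl(\tfrac{1}{\sinh(x/2)} + \tfrac{1}{\cosh(x/2)}\bigr)\,dx$; and the prime-ideal contribution $2\sum_{p \in S}\sum_m \tfrac{\log p}{p^{m/2}} F(m \log p)$. If the resulting $B$ is positive, Lemma \ref{SplitPrimeLemma} yields $h < 2c\sqrt{\pi}\,e^{(c/4)^2}/(nB)$, and I would tune $c$ (and enlarge $S$ if necessary) until this bound falls comfortably below $80000$.

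The main obstacle will be getting enough of a contribution from the split primes: because the root discriminant is larger than for $p=197$, the single smallest split prime may not be sufficient, and I may need to find several small split prime norms, or follow the trick of Proposition \ref{Prop151} of factoring elements of composite norm $pq$ (with $q$ already known to be split) to extract additional primes $p$. Once the bound drops below $80000$, invoking Schoof's Main Table pins the class number down to $1$, consistent with Theorem \ref{MainResult}.
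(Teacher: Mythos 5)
Your proposal follows essentially the same route as the paper: a sparse-vector search for algebraic integers whose norms are primes congruent to $\pm 1 \pmod{199}$, followed by Lemma \ref{SplitPrimeLemma} and Schoof's table. The paper carries this out with two elements of prime norms $26267$ and $29453$ (not the very smallest split primes such as $397$) and $c = 11.5$, obtaining $h < 47719 < 80000$, exactly as your plan anticipates.
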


\begin{proof}
As in the proof of the previous proposition, we find two algebraic integers of small prime norm congruent to $\pm 1$ modulo $199$:
$$N(b_0 + b_1 - b_{14} - b_{23} + b_{59} - b_{77} ) = 29453,$$
 and
$$N(b_0 + b_1 + b_6 - b_{35} + b_{54} - b_{56} - b_{96}) = 26267.$$
We apply Lemma \ref{SplitPrimeLemma} using $S=\{26267, 29453\}$ and $c = 11.5$ to show that the class number is less than $47719$, and use Schoof's table to prove that the class number is $1$. 
\end{proof}

\begin{proposition}
Under the generalized Riemann hypothesis, the class number of \\$\Q(\zeta_{211} + \zeta_{211}^{-1})$ is $1$.
\end{proposition}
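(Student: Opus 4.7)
The plan is to mirror the approach used for the preceding two propositions, since this proposition sits in the same regime: the root discriminant of $\Q(\zeta_{211}+\zeta_{211}^{-1})$ is $211^{208/210}\approx 198.3$, which is out of reach of the Odlyzko GRH bounds by themselves (they would give a class number bound well above $80000$), but is only modestly larger than for $p=199$. So a short explicit search for split principal prime ideals, followed by an application of Lemma \ref{SplitPrimeLemma}, should suffice.

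First I would search sparse vectors in the integral basis $\{b_0,b_1,\dots,b_{n-1}\}$ with $n=105$, taking elements of the form $b_0+b_1+\sum_i a_i b_{j_i}$ with $a_i\in\{-1,0,1\}$ and a small number of nonzero entries, and computing their norms. Any norm that is a prime integer congruent to $\pm 1\pmod{211}$ corresponds to a rational prime that splits completely into principal ideals in $\Q(\zeta_{211}+\zeta_{211}^{-1})$, so it is eligible for inclusion in the set $S$ of Lemma \ref{SplitPrimeLemma}. If the sparse search with the $b_j$ basis does not produce enough such primes, I would augment it with a search over the alternative basis $\{c_k\}$ from Section 6, and, if still necessary, follow the device used in Proposition \ref{Prop151}: factor composite norms $pq$ where one factor $q$ is already in $S$, and use a Galois conjugate of the norm-$q$ element to produce an algebraic integer of norm $p$.

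Once a sufficient set $S$ is assembled, I would choose $c$ in the range suggested by Propositions for $p=197,199$ (probably $c\approx 11\text{--}14$, scaling up slightly with the root discriminant), compute the lower bound for
\[
2\sum_{p\in S}\sum_{m=1}^{\infty}\frac{\log p}{p^{m/2}}F(m\log p),
\]
and the upper bound for the integral
\[
\int_0^\infty \frac{1-F(x)}{2}\left(\frac{1}{\sinh\tfrac{x}{2}}+\frac{1}{\cosh\tfrac{x}{2}}\right)dx,
\]
by numerical integration. Combining these with $\log\op{rd}(K)=\tfrac{208}{210}\log 211$ gives $B$, and then Lemma \ref{SplitPrimeLemma} yields $h<\frac{2c\sqrt{\pi}e^{(c/4)^2}}{nB}$. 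I would tune $c$ (and enlarge $S$ if needed) until this bound falls below $80000$, at which point Schoof's table forces $h=1$.

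The main obstacle is quantitative: the increased root discriminant makes the $-\log\op{rd}(K)$ term more negative, so more or better split primes are required to keep $B>0$, but increasing $c$ to exploit many larger primes inflates the exponential factor $e^{(c/4)^2}$ in the bound. The balancing act between the size of $c$, the size of $S$, and the resulting numerical value of $B$ is the only delicate point; everything else is routine, following the template of the two previous propositions.
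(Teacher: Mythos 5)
Your proposal follows essentially the same route as the paper: a sparse-vector search in the basis $\{b_j\}$ for an element whose norm is a prime congruent to $\pm 1 \pmod{211}$, followed by Lemma \ref{SplitPrimeLemma} and Schoof's table. In fact the paper needs none of the fallback devices you mention (alternative basis, quotients of composite norms): a single element $b_0+b_1-b_8-b_{60}+b_{64}+b_{67}$ of prime norm $2111$ with $c=10.75$ already gives $h<13476<80000$.
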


\begin{proof}
We find an algebraic integer of small prime norm congruent to $\pm 1$ modulo $211$:
$$N(b_0 + b_1 - b_8 - b_{60} + b_{64} + b_{67} ) = 2111.$$
We apply Lemma \ref{SplitPrimeLemma} using $S=\{2111\}$ and $c = 10.75$ to get that the class number is less than $13476$, and use Schoof's table to find the class number is $1$. 
\end{proof}

For fields of larger discriminant, it is more difficult to find split primes of small norm.  Often the quickest approach is to additionally search over sparse vectors using a different basis, and then take quotients of appropriately chosen algebraic integers.  We use the alternative basis
$$c_k = \sum_{j=0}^k b_j, \quad k=0, 1,\dots, n-1$$
given earlier.  This approach is used in the proof of the following proposition.

\begin{proposition}
Under the generalized Riemann hypothesis, the class number of \\$\Q(\zeta_{223} + \zeta_{223}^{-1})$ is $1$.
\end{proposition}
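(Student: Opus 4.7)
The plan is to follow the same recipe as in the immediately preceding propositions, but with a more extensive search because the root discriminant $223^{220/222} \approx 212$ is large enough that a handful of small split primes will not force $B > 0$ on its own. First, I would enumerate sparse $\pm 1$-vectors in both the standard integral basis $\{b_0, b_1, \dots, b_{n-1}\}$ and the alternative basis $c_k = \sum_{j=0}^k b_j$ (with $n = 111$), restricting to combinations involving only a handful of nonzero coordinates. For each such $x$ I would compute $N(x)$ and keep those whose absolute value, after stripping the totally ramified $223$-part, is a prime integer $q \equiv \pm 1 \pmod{223}$. Each such $q$ splits completely into principal prime ideals of $\Q(\zeta_{223} + \zeta_{223}^{-1})$ and may be placed into the set $S$ of Lemma \ref{SplitPrimeLemma}.

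When a direct search yields too few small split primes, I would employ the quotient trick from Proposition \ref{Prop151}: if the search produces $x \in \O$ with $N(x) = pq$ and $y \in \O$ with $N(y) = q$ for some $q$ already on the list, then for a suitable Galois automorphism $\sigma$ the quotient $x / \sigma(y)$ lies in $\O$ and has norm $p$, giving a new split prime for our list. Iterating this, and removing any $223$-factors by dividing by an element of norm $223$ such as $2 b_0 - b_1$, enriches $S$ with additional small primes that would be very expensive to locate by direct sparse search.

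With $S$ sufficiently populated, I would tune the parameter $c$ and apply Lemma \ref{SplitPrimeLemma} under GRH: the prime-ideal contribution $2 \sum_{p \in S} \sum_{m \geq 1} \frac{\log p}{p^{m/2}} F(m \log p)$ must exceed $\log \op{rd}(K)$ plus the tail integral minus $\pi/2 + \gamma + \log 8\pi$, so that $B > 0$. Once this is achieved, the resulting upper bound on $h$ will comfortably lie below $80{,}000$, and then Schoof's table \cite{Schoof} pins down $h = 1$.

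The main obstacle is purely computational: for $p = 223$ the discriminant deficit requires a much larger reserve of split primes in the range where $F(m \log p)$ is still appreciable than was needed at $p \leq 211$. Raising $c$ widens the window in which the prime sum contributes but simultaneously weakens the final bound on $h$, so the choice of $c$ must be made only after $S$ is generated and then balanced against the size of $S$; likely several passes of sparse search plus quotient-reduction will be needed before $B$ becomes decisively positive.
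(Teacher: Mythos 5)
Your proposal follows essentially the same route as the paper: sparse-vector searches over both the $(b_j)$ and $(c_k)$ bases, the Galois-conjugate quotient trick to extract principal split primes (the paper obtains $S = \{2677, 6689\}$ this way), then Lemma \ref{SplitPrimeLemma} with a tuned $c$ (the paper uses $c = 10.5$, giving $h < 6762$) and Schoof's table to conclude $h = 1$. The only thing missing is the explicit computational data, which is inherent to this kind of argument.
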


\begin{proof}
Searching over the alternative basis $(c_k)$, we find the algebraic integer
$$\alpha = c_0-c_6-c_{26}-c_{77}+c_{99}$$
which has norm $6689 \cdot 42284369$.
Searching over the usual basis $(b_j)$, we find
$$\beta = b_1 + b_{11} + b_{30} + b_{95}$$
which has norm $42284369$.
For some Galois automorphism $\sigma$, the quotient $\gamma = \frac{\sigma(\alpha)}{\beta}$ is an algebraic integer of norm $6689$.
We also can find the algebraic integer
$$\delta = c_0 + c_6 - c_{11} - c_{15} + c_{25}$$
which has norm $2677 \cdot 6689$.  Taking quotients again using a (possibly different) Galois automorphism $\sigma$, we can find an algebraic integer $\frac{\sigma(\delta)}{\gamma}$ of norm  $2677$.
Letting $S = \{2677, 6689\}$ and $c=10.5$, we can apply Lemma \ref{SplitPrimeLemma} to find that the class number is less than $6762$, and we use Schoof's table \cite{Schoof} to find the class number is $1$. 
\end{proof}

\begin{proposition}
Under the generalized Riemann hypothesis, the class number of \\$\Q(\zeta_{227} + \zeta_{227}^{-1})$ is $1$.
\end{proposition}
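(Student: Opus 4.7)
The plan is to mirror the strategy used for $\Q(\zeta_{223} + \zeta_{223}^{-1})$. The root discriminant of $\Q(\zeta_{227} + \zeta_{227}^{-1})$ is $227^{224/226} \approx 216$, which is only modestly smaller than the previous case, so Odlyzko's GRH bounds alone are not strong enough to bring an upper bound below Schoof's $80000$ threshold. I therefore need to exhibit enough totally split principal prime ideals of small norm so that, applied inside Lemma \ref{SplitPrimeLemma}, the contribution $2 \sum_{p \in S} \sum_{m \geq 1} \frac{\log p}{p^{m/2}} F(m\log p)$ makes $B$ positive and the resulting bound $\frac{2 c \sqrt{\pi} e^{(c/4)^2}}{nB}$ comes in under $80000$.

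First I would search over sparse vectors in the standard basis $\{b_0, b_1, \dots, b_{n-1}\}$ with coefficients in $\{-1, 0, 1\}$ and a limited number of nonzero entries (say six or seven), computing norms and keeping any that are prime and $\equiv \pm 1 \pmod{227}$. Since the smallest candidate splitting primes ($907, 1361, \dots$) are already fairly large and the density of sparse vectors giving small prime norms falls off quickly as the conductor grows, I expect that a direct search will not by itself produce a sufficient set. Following the $p=151$ and $p=223$ proofs, I would supplement this with a search over sparse combinations in the alternative basis $c_k = \sum_{j=0}^{k} b_j$, collecting all norms up to a chosen cutoff.

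The main obstacle will be the step of extracting genuinely small prime norms. Most sparse vectors I find will have composite norms of the form $pq$ (or even $pqr$) where each factor is a prime congruent to $\pm 1 \pmod{227}$. The tool for handling this is the Galois-quotient trick already used for $p=223$: whenever $N(\alpha) = pq$ and I already possess a $\beta$ with $N(\beta) = q$, there is a Galois automorphism $\sigma$ such that $\sigma(\alpha)/\beta$ is an algebraic integer of norm $p$. By chaining such divisions I expect to produce a small set $S$ of single prime norms, for example a pair analogous to $\{2677, 6689\}$ in the $p=223$ proof.

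Finally, with $S$ in hand I would choose $c$ in the range $10$–$12$ (tuning to maximize the bound obtained from Lemma \ref{SplitPrimeLemma}), numerically evaluate the required integral $\int_0^\infty \frac{1-F(x)}{2}\bigl(\frac{1}{\sinh(x/2)} + \frac{1}{\cosh(x/2)}\bigr)\, dx$, and verify that $B > 0$ and that the resulting class number bound is well under $80000$. The concluding step is to cite Schoof's table \cite{Schoof}, whose tabulated $\tilde h$ for $p = 227$ is $1$, so the class number must equal $1$ once the bound is in the admissible range.
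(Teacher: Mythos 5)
Your proposal follows exactly the strategy of the paper's own proof: a sparse-vector search over both the $(b_j)$ and $(c_k)$ bases, chained Galois-conjugate quotients to extract algebraic integers of single prime norm (the paper obtains $S = \{1361, 7717, 20431\}$), then Lemma \ref{SplitPrimeLemma} with a tuned $c$ (the paper uses $c = 9.75$, just below your proposed range, yielding a bound of $1431$) and Schoof's table to conclude $h = 1$. The approach is correct and essentially identical to the paper's.
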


\begin{proof}
Searching over sparse vectors, using our two bases $(b_j)$ and $(c_k)$, we find the following elements and their norms:
\begin{center}
\begin{tabular}{| l | c| }
\hline
  Element & Norm \\ \hline
  $b_0 + b_1 - b_{21} + b_{75} - b_{96} - b_{112}$	& $4053311$\\
  $c_0 + c_4 + c_{35} - c_{56} + c_{83}$			& $4053311 \cdot 7717$\\
  $c_0 + c_{40} + c_{68} + c_{77} + c_{83}$		& $7717 \cdot 20431$\\
  $b_1 - b_{12} - b_{41} - b_{53}$				& $20431 \cdot 1361$\\
\hline
\end{tabular}
\end{center}
By successively taking quotients by the appropriate Galois conjugates, we can find algebraic integers of norms $7717$, $20431$ and $1361$.

Setting $S = \{1361, 7717, 20431\}$ and $c=9.75$, we apply Lemma \ref{SplitPrimeLemma} to get a class number upper bound of $1431$.  Using Schoof's table, we find the class number is $1$.
\end{proof}

\begin{proposition}
Under the generalized Riemann hypothesis, the class number of \\$\Q(\zeta_{229} + \zeta_{229}^{-1})$ is $3$.
\end{proposition}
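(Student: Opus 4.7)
The plan is to follow the same template used for the propositions with $p = 223$ and $p = 227$, the only difference being that Schoof's table will identify the class number as $3$ rather than $1$ once an upper bound below $80000$ has been established. The root discriminant of $\Q(\zeta_{229}+\zeta_{229}^{-1})$ is $229^{226/228} \approx 224.99$, so the contribution needed from the split-prime sum in Lemma \ref{SplitPrimeLemma} is larger than in the $p=227$ case; to supply it I will need several prime integers congruent to $\pm 1 \pmod{229}$ for which explicit algebraic integers of that prime norm exist in $\O$.

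First I would search over sparse vectors in the integral basis $\{b_j\}$, looking for elements of prime norm congruent to $\pm 1 \pmod{229}$. Because such primes become rare at small norm as $p$ grows, I expect a direct hit to produce at best one or two useful primes, so in parallel I would search over sparse vectors in the alternative basis $c_k = \sum_{j=0}^k b_j$ for elements whose norms factor as (small prime in the required congruence class)$\,\times\,$(another integer), and then use the Galois-conjugate quotient trick from the $p=223$ and $p=227$ proofs: if $N(\alpha) = pq$ and $N(\beta) = q$ with $q \equiv \pm 1 \pmod{229}$ already in hand, then $\sigma(\alpha)/\beta \in \O$ has norm $p$ for a suitable $\sigma \in \op{Gal}$. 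Iterating this bootstrap on a chain of factorizations $p_1,\ p_1 p_2,\ p_2 p_3,\ \dots$ should yield a set $S$ of several split primes of moderate size.

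With $S$ in hand, I would tune the parameter $c$ in Lemma \ref{SplitPrimeLemma} so that the Gaussian weight $F(x) = e^{-(x/c)^2}$ makes the prime-sum contribution large enough to overcome $\log \op{rd}(K) \approx \log 224.99 \approx 5.42$ together with the integral term (numerically around $1.26$ for $c$ in the $9$–$11$ range), yielding $B > 0$. Concretely, I expect values like $c \approx 10$ and a sum contribution above $0.87$ or so will give an upper bound of a few thousand, comfortably below $80000$. Then Schoof's Main Table, which records $\tilde{h} = 3$ at $p = 229$, yields $h = 3$.

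The principal obstacle is the search step: I need enough split primes of small enough norm, and for $p = 229$ a pure search in one basis over six or seven nonzero coordinates may not uncover a prime norm $\equiv \pm 1 \pmod{229}$ below the cutoff. Overcoming this is precisely what the two-basis search and the repeated quotient-by-conjugate procedure are designed for, and presenting an explicit table of elements analogous to the $p = 227$ table, together with the derived elements obtained by Galois-conjugate division, will be the substance of the proof.
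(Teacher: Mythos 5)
Your strategy is exactly the one the paper uses: a two-basis sparse-vector search, the Galois-conjugate quotient bootstrap along a chain of factorizations, Lemma \ref{SplitPrimeLemma} with a tuned $c$, and Schoof's table reading $\tilde{h}=3$ at $p=229$. The paper's proof confirms your plan works: it exhibits a chain of seven elements (starting from $c_0 - c_4 - c_{10} - c_{41} + c_{106} + c_{112}$ of norm $4887317$ and ending with $c_0 - c_4 - c_{41} + c_{53} + c_{96}$ of norm $6871 \cdot 2749$), extracts algebraic integers of norms $6871$ and $2749$ by successive quotients, and applies the lemma with $S = \{2749, 6871\}$ and $c = 11$ to get the bound $h < 12734$ --- close to your guessed parameters. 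The one thing to flag is that what you have written is a correct plan rather than a completed proof: for an argument of this kind the entire mathematical content is the explicit table of elements and the numerical verification that $B > 0$, and you acknowledge but do not supply these. Until the search is actually run and the elements exhibited, the claim that enough split primes of sufficiently small norm exist remains an expectation, not a demonstration.
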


\begin{proof}
Searching over sparse vectors, using our two bases $(b_j)$ and $(c_k)$, we find the following elements and their norms:
\begin{center}
\begin{tabular}{| l | c| }
\hline
  Element & Norm \\ \hline
  $c_0 - c_4 - c_{10} - c_{41} + c_{106} + c_{112}$	& $4887317$\\
  $b_1 - b_6 + b_{54} + b_{107}$				& $4887317 \cdot 2699453$\\
  $c_0 + c_{10} + c_{53} + c_{71} - c_{79} + c_{87}$	& $2699453 \cdot 49463$\\
  $c_0 - c_{13} + c_{14} - c_{63} - c_{77} + c_{79}$	& $49463 \cdot 207017$\\
  $b_0 + b_1 - b_2 - b_{16} - b_{72} + b_{88}$		& $207017 \cdot 43051$\\
  $c_0 - c_3 - c_{41} - c_{45} + c_{67}$			& $43051 \cdot 6871$\\
  $c_0 - c_4 - c_{41} + c_{53} + c_{96}$			& $6871 \cdot 2749$\\
\hline
\end{tabular}
\end{center}
By successively taking quotients by the appropriate Galois conjugates, we can find algebraic integers of norms $6871$ and $2749$.

Setting $S = \{2749, 6871\}$ and $c=11$, we apply Lemma \ref{SplitPrimeLemma} to show a class number upper bound of $12734$.  Using Schoof's table, we prove the class number is $3$.
\end{proof}

\begin{proposition}
Under the generalized Riemann hypothesis, the class number of \\$\Q(\zeta_{233} + \zeta_{233}^{-1})$ is $1$.
\end{proposition}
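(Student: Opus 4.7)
The plan is to follow the same pattern used for $p=223, 227, 229$: build a short chain of algebraic integers whose norms factor nicely, extract a small set $S$ of prime integers $\equiv \pm 1 \pmod{233}$ that arise as norms of algebraic integers in $\Q(\zeta_{233}+\zeta_{233}^{-1})$, and then apply Lemma \ref{SplitPrimeLemma}. The degree is $n=116$, and the root discriminant is $233^{230/232} \approx 222$, so a few small split primes should be enough to push the upper bound below Schoof's threshold of $80000$.

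The first step is a computational search over sparse vectors in the standard basis $\{b_0,\dots,b_{115}\}$ and in the alternative basis $\{c_0,\dots,c_{115}\}$. I would compute the norms of elements with at most six or seven nonzero coefficients in $\{-1,0,1\}$, filter out the $p$-part (dividing by $2b_0 - b_1$ as in the $p=151$ proof if the norm is divisible by $233$), and look for norms that either are, or factor as a small product involving, primes congruent to $\pm 1 \pmod{233}$.

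The second step is to chain these together as in the $p=227,229$ proofs. If a search yields $\alpha_1$ with $N(\alpha_1) = q_1 r_1$ and separately $\alpha_2$ with $N(\alpha_2) = r_1$, then for an appropriate Galois automorphism $\sigma$ the quotient $\alpha_1/\sigma(\alpha_2)$ is an algebraic integer of norm $q_1$, producing a new small split prime for $S$. Iterating this procedure across a chain such as
\begin{align*}
N(\alpha_1) &= q_1 \cdot r_1, & N(\alpha_2) &= r_1 \cdot r_2, & N(\alpha_3) &= r_2 \cdot q_2, & \dots
\end{align*}
lets one extract a handful of small primes $q_i \equiv \pm 1 \pmod{233}$ even when no short sparse vector has such a prime as its full norm.

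The third step is to choose a constant $c$ (likely in the range $c \approx 10$--$12$, by analogy with the nearby $p=229$ case) and evaluate the quantity $B$ of Lemma \ref{SplitPrimeLemma}, using numerical integration for the integral term and truncating the double sum at $m=2$ for the prime-ideal contribution, aiming to obtain $B > 0$ and an upper bound $h < 80000$. Then Schoof's table forces $h = 1$, matching the value $\tilde h = 1$ listed for conductor $233$.

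The main obstacle is the combinatorial search: for a field of this size the sparse-vector enumeration is costly, and the chain of norm relations is only useful if the prime factors $r_i$ reappearing between consecutive elements happen to coincide. In practice one may need to experiment with the sparsity pattern, the basis, and the length of the chain before a chain of manageable primes $q_i$ emerges; but once found, the numerical verification via Lemma \ref{SplitPrimeLemma} and the appeal to Schoof's table are routine.
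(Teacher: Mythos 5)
Your proposal follows essentially the same route as the paper: a sparse-vector search over both the $(b_j)$ and $(c_k)$ bases, a chain of elements with overlapping norm factors whose successive Galois-conjugate quotients extract small split primes, then Lemma \ref{SplitPrimeLemma} and Schoof's table. In the paper the chain $53591,\ 53591\cdot 76423,\ 76423\cdot 174749,\ 174749\cdot 467$ yields the single prime $467$, and $S=\{467\}$ with $c=9.5$ already gives $h<1450$, so the computation you outline does indeed close as expected.
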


\begin{proof}
Searching over sparse vectors, using our two bases $(b_j)$ and $(c_k)$, we find the following elements and their norms:
\begin{center}
\begin{tabular}{| l | c| }
\hline
  Element & Norm \\ \hline
  $b_0 + b_1 + b_{14} + b_{69}$				& $53591$\\
  $b_1 + b_6 - b_{21} + b_{77} - b_{114}$			& $53591 \cdot 76423$\\
  $b_0 + b_1 - b_8 - b_{40} + b_{47} - b_{86}$		& $76423 \cdot 174749$\\
  $b_0 + b_1 + b_{20} + b_{95}$				& $174749 \cdot 467$\\
\hline
\end{tabular}
\end{center}
By successively taking quotients by the appropriate Galois conjugates, we can find algebraic integers of norm $467$.

Setting $S = \{467\}$ and $c=9.5$, we apply Lemma \ref{SplitPrimeLemma} to find a class number upper bound of $1450$.  Using Schoof's table, we prove the class number is $1$.
\end{proof}

\begin{proposition}
Under the generalized Riemann hypothesis, the class number of \\$\Q(\zeta_{239} + \zeta_{239}^{-1})$ is $1$.
\end{proposition}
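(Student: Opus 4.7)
The plan is to mirror the strategy used for the nearby prime conductors $227$, $229$, and $233$: search for algebraic integers in $\Q(\zeta_{239}+\zeta_{239}^{-1})$ whose norms factor nicely into primes congruent to $\pm 1$ modulo $239$, chain them together via Galois-conjugate quotients to isolate small prime norms, and then feed the resulting set $S$ into Lemma \ref{SplitPrimeLemma} to establish an upper bound for the class number that falls below Schoof's threshold of $80000$. Once we have such a bound, Schoof's table pins down the class number exactly, and all prior data (e.g.\ \cite{Schoof}) lead us to expect the answer to be $1$.

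Concretely, I would first compute the root discriminant of $\Q(\zeta_{239}+\zeta_{239}^{-1})$, namely $239^{236/238}$, which is approximately $230$, and use this to estimate roughly how large the prime contribution $2 \sum_{p \in S}\sum_m \frac{\log p}{p^{m/2}} F(m \log p)$ must be in order for the quantity $B$ in Lemma \ref{SplitPrimeLemma} to be positive with a tunable value of $c$ near $10$. Next, I would run the sparse-vector search over both bases $\{b_j\}$ and $\{c_k\}$ defined earlier, restricting coefficients to $\{-1,0,1\}$ and the support size to roughly four to seven, computing norms modulo the condition $N(x) \equiv \pm 1 \pmod{239}$. From the resulting list I would select a small chain of elements whose norms share common prime factors (of the form $p_i \cdot p_{i+1}$ or $p_i \cdot q$ with $q$ already known), so that repeated quotients by suitable Galois conjugates $\sigma(\cdot)$ produce algebraic integers with \emph{prime} norms small enough to contribute meaningfully to the sum.

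With a few such small split primes $S = \{p_1, p_2, \dots\}$ in hand, I would then optimize the constant $c$ to balance the Gaussian cutoff $e^{(c/4)^2}$ against the truncated prime sum, verify that $B > 0$ numerically, and extract the resulting upper bound $h < \frac{2c\sqrt{\pi}\, e^{(c/4)^2}}{nB}$ with $n = 119$. Provided this bound is well under $80000$, Schoof's ``Main Table'' in \cite{Schoof} then forces $h = 1$.

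The main obstacle is a purely computational one: because the root discriminant here is the largest yet in the paper (larger than for $p = 233$), a single small-norm element will almost certainly not suffice (compare the cascades needed at $p = 227, 229$). The challenge will be to locate a chain of sparse-vector elements whose norms share the \emph{same} intermediate prime factors, so that the quotient construction actually isolates prime norms rather than stalling on a composite. I expect, based on the pattern at conductors $227$ and $229$, that a chain of four to seven elements across the two bases will be needed, and that the resulting set $S$ of two or three small split primes together with $c \approx 10$ will comfortably make $B > 0$ and drive the class-number bound well below $80000$.
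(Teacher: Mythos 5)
Your proposal follows exactly the paper's strategy: a sparse-vector search over the two bases $(b_j)$ and $(c_k)$, a chain of elements with shared prime factors whose Galois-conjugate quotients isolate small split primes, and an application of Lemma \ref{SplitPrimeLemma} followed by Schoof's table. The paper carries this out with a chain of four elements yielding $S = \{2389, 10993, 271981\}$ and $c = 11.5$, giving the bound $h < 32486$, which matches your predicted chain length, choice of $c \approx 10$--$11$, and size of $S$.
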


\begin{proof}
Searching over sparse vectors, using our two bases $(b_j)$ and $(c_k)$, we find the following elements and their norms:
\begin{center}
\begin{tabular}{| l | c| }
\hline
  Element & Norm \\ \hline
  $b_0 + b_1 - b_{10} - b_{18} - b_{106} - b_{107} + b_{116}$	& $4423479397$\\
  $b_1 - b_2 - b_{33} - b_{60} + b_{83} - b_{84}$			& $4423479397 \cdot 2389$\\
  $b_0 + b_1 + b_{22} - b_{46}$						& $2389 \cdot 271981$\\
  $c_0 - c_{11} + c_{21} - c_{93} + c_{104}$				& $271981 \cdot 10993$\\
\hline
\end{tabular}
\end{center}
By successively taking quotients by the appropriate Galois conjugates, we can find algebraic integers of norms $2389$, $271981$ and $10993$.

Setting $S = \{2389, 10993, 271981\}$ and $c=11.5$, we apply Lemma \ref{SplitPrimeLemma} to establish a class number upper bound of $32486$.  Using Schoof's table, we find the class number is $1$.
\end{proof}

\begin{proposition}
Under the generalized Riemann hypothesis, the class number of \\$\Q(\zeta_{241} + \zeta_{241}^{-1})$ is $1$.
\end{proposition}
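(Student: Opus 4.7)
The plan is to follow the same strategy used in the preceding propositions, especially those for $p=229$ and $p=239$. The root discriminant of $\Q(\zeta_{241}+\zeta_{241}^{-1})$ equals $241^{238/240} \approx 230.7$, which exceeds the GRH discriminant bound $A = 213.626$ from Odlyzko's table, so the direct Odlyzko argument used for $p \leq 193$ does not apply. We therefore need to invoke Lemma \ref{SplitPrimeLemma}, which requires producing a set $S$ of prime integers $\equiv \pm 1 \pmod{241}$ that split completely into principal prime ideals, large enough that the resulting quantity $B$ is positive.

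First I would search sparse vectors with coefficients in $\{-1,0,+1\}$ in both the standard basis $(b_j)$ and the alternative basis $(c_k)$, computing the norm of each candidate and retaining those whose non-$p$ part is a prime, or a product of two or three primes, congruent to $\pm 1$ modulo $241$. Because the conductor is now the largest we treat, small primes of this form are scarce and direct hits on a single small split prime via a sparse vector are unlikely; most successful candidates will be products of a small prime $\equiv \pm 1 \pmod{241}$ with one or more larger cofactors of the same congruence class.

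Next I would chain these elements together. If $\alpha, \beta \in \O$ have norms $p_1 q$ and $q p_2$ respectively with a shared prime factor $q$, then for a suitably chosen Galois automorphism $\sigma$, the ratio $\sigma(\alpha)/\beta$ lies in $\O$ and has norm $p_1$ (or $p_2$). Repeated application of this quotient trick along a chain of elements with overlapping composite norms eventually isolates algebraic integers of small individual prime norms, which can then be added to $S$. I would continue the search and chaining until the prime sum in Lemma \ref{SplitPrimeLemma} is large enough to dominate $\log \op{rd}(K) \approx 5.44$.

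Finally, with $S$ in hand and $c$ tuned (likely near $10$–$12$, as in the adjacent conductors), I would apply Lemma \ref{SplitPrimeLemma} to obtain a numerical class number upper bound well below $80{,}000$, then appeal to Schoof's table to conclude that the class number is exactly $1$. The main obstacle is the combination of large root discriminant and sparse supply of small primes $\equiv \pm 1 \pmod{241}$: since each term contributes roughly $2(\log p)/\sqrt{p}\cdot F(\log p)$, and this decays quickly in $p$, a longer chain of quotients, passing through several sizable intermediate composite norms, will likely be required to push $B$ above $0$ by a safe margin. Once this is engineered, the rest is routine numerical integration and appeal to Schoof.
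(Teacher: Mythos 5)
Your proposal follows essentially the same route as the paper: sparse-vector searches with coefficients in $\{-1,0,1\}$ over both the $(b_j)$ and $(c_k)$ bases, successive quotients by Galois conjugates along a chain of overlapping composite norms (discarding the totally ramified $241$-part) to isolate small split primes, then Lemma \ref{SplitPrimeLemma} followed by Schoof's table. The paper instantiates exactly this plan with an eight-element chain producing principal primes $1447$, $5783$ and $12049$, the choice $c=10$, and a resulting class number bound of $2153$, so your outline is correct and differs only in not exhibiting the explicit computational data.
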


\begin{proof}
Searching over sparse vectors, using our two bases $(b_j)$ and $(c_k)$, we find the following elements and their norms:
\begin{center}
\begin{tabular}{| l | c| }
\hline
  Element & Non-241 part of norm \\ \hline
  $b_0 + b_1 - b_{37} - b_{52} - b_{118}$					& $5926189$\\
  $c_0 + c_8 - c_{12} + c_{43} - c_{47} + c_{99}$			& $5926189 \cdot 87487819$\\
  $b_0 + b_1 + b_{10} + b_{11} + b_{49} + b_{56} - b_{117}$	& $87487819 \cdot 47237$\\
  $c_0 - c_6 - c_{63} - c_{67} + c_{68} $					& $47237 \cdot 12049$\\
  $b_1 - b_2 + b_3 - b_{17} - b_{44} + b_{61}$				& $12049 \cdot 68927$\\
  $c_0 - c_{15} + c_{46} - c_{65} + c_{66} $				& $68927 \cdot 56393$\\
  $c_0 + c_3 + c_{42} + c_{53} + c_{95} + c_{100}$			& $56393 \cdot 5783$\\
  $b_1 + b_2 + b_{61} + b_{76} - b_{104}$				& $5783 \cdot 1447$\\
\hline
\end{tabular}
\end{center}
Note that although the norm of $b_1 - b_2 + b_3 - b_{17} - b_{44} + b_{61}$ is actually $12049 \cdot 68927 \cdot 241$, we can divide by any element of norm 241, such as $2b_0 - b_1$, to get an algebraic integer of norm $12049 \cdot 68927$.
 
By successively taking quotients by the appropriate Galois conjugates, we can find algebraic integers of norms $12049$, $5783$ and $1447$.

Setting $S = \{1447, 5783, 12049\}$ and $c=10$, we apply Lemma \ref{SplitPrimeLemma} to show a class number upper bound of $2153$.  Using Schoof's table \cite{Schoof}, we prove the class number is $1$.
\end{proof}

This completes the proof of Theorem \ref{MainResult}.

\section{Concluding remarks}
It is remarkable to observe the power of analytic objects -- the L-functions and their explicit formulae -- when brought to bear on the essentially algebraic problem of determining the class number of a real cyclotomic field.  For example, to unconditionally prove that $\Z[\zeta_{83} + \zeta_{83}^{-1}]$ is a principal ideal domain using the explicit formula, we really only need to prove three prime ideals are principal, which stands in stark contrast to using the Minkowski bound, which require approximately $10^{20}$ primes to be checked.

The techniques used in this paper may be used to calculate upper bounds of class numbers of other totally real number fields of moderately large discriminant, allowing us to tackle the class number problem for a large group of number fields which previously had not been treatable by any known methods.  For example, the calculation of the class numbers of the real cyclotomic fields of conductors 256 and 512 can be found in the author's earlier paper \cite{Miller}.

\section{Acknowledgments}
I would like to thank my advisor, Henryk Iwaniec, for introducing me to the class number problems of cyclotomic fields, and for his steadfast encouragement.  I appreciate the interest and feedback of Stephen D. Miller, Carl Pomerance, Ren\'e Schoof and Christopher Skinner.  Finally, I am exceedingly thankful for the careful reading and suggestions of Jerrold Tunnell, Lawrence Washington, and the anonymous referee.

\medskip

\end{document}